\documentclass[a4paper,11pt]{article}

\usepackage[notext]{kpfonts}
\usepackage{baskervald}

\usepackage{amsfonts,amssymb,amsmath,amsthm,abstract,color}
\usepackage[mathscr]{euscript}
\usepackage[ps,all,arc,rotate]{xy}
\usepackage[lmargin=1in,rmargin=1in,tmargin=1in,bmargin=1in]{geometry}
\usepackage{fancyhdr}
\usepackage{dsfont}
\usepackage{pb-diagram}
\usepackage{enumitem}
\usepackage[backref=page]{hyperref}
\usepackage{soul}
\usepackage[usenames,dvipsnames,natural,table]{xcolor}
\usepackage{booktabs,tabularx,longtable}
\usepackage{tikz-cd}
\usepackage{extpfeil}
\usepackage{colortbl}
\usepackage{float}
\hypersetup{colorlinks=true,citecolor=NavyBlue,linkcolor=Brown,urlcolor=Orange}

\usepackage{stmaryrd}
\usepackage[capitalise]{cleveref}
\numberwithin{equation}{section}

\setstcolor{red}

\usepackage{caption}

\usepackage{titlesec}

\titlespacing*{\section}
{0pt}{3ex plus 1ex minus .2ex}{2.5ex plus .2ex}
\titlespacing*{\subsection}
{0pt}{2ex plus 1ex minus .2ex}{1.5ex plus .2ex}
\titlespacing*{\subsubsection}
{0pt}{2ex plus 1ex minus .2ex}{1.5ex}

\newcommand{\sectionpoint}{\if \@empty\titlesec \else}              

\newcommand*{\justifyheading}{\raggedright}
\titleformat{\section}{\normalfont \Large \bfseries \justifyheading}{\thesection.}{0.5em}{}
\titleformat{\subsection}[runin]{\normalfont \large \bfseries \justifyheading}{\thesubsection.}{0.5em}{}[.]
\titleformat{\subsubsection}[runin]{\normalfont \normalsize \bfseries \justifyheading}{\thesubsubsection.}{0.5em}{}[  \protect{\rule[3pt]{10pt}{0.5pt}}]

\theoremstyle{plain}
\newtheorem*{theorem*}{Theorem}
\newtheorem{theorem}{Theorem}[section]

\theoremstyle{definition}

\theoremstyle{remark}

\newsavebox{\pmatrixbox}
\newenvironment{colorpmatrix}
  {\begin{lrbox}{\pmatrixbox}
   \mathsurround=0pt
   $\displaystyle
   \begin{pmatrix}}
  {\end{pmatrix}$%
   \end{lrbox}%
   \usebox{\pmatrixbox}%
   \kern-\wd\pmatrixbox
   \makebox[0pt][l]{$\left(\vphantom{\usebox{\pmatrixbox}}\right.$}%
   \kern\wd\pmatrixbox
}
\DeclareMathOperator{\id}{id}

\DeclareMathOperator{\rk}{rk}

\DeclareMathOperator{\bO}{\mathrm{O}}
\DeclareMathOperator{\Sgn}{sgn}
\DeclareMathOperator{\A}{\mathbf{A}}
\DeclareMathOperator{\bD}{\mathbf{D}}
\DeclareMathOperator{\bL}{\mathbf{L}}

\DeclareMathOperator{\bH}{\mathbf{H}}
\DeclareMathOperator{\bK}{\mathbf{K}}
\DeclareMathOperator{\U}{\mathbf{U}}
\DeclareMathOperator{\bLambda}{\boldsymbol{\Lambda}}
\DeclareMathOperator{\OG6}{OG6}

\begin{document}

\title{\textbf{Corrigendum to ``Nonsymplectic automorphisms of prime order on O’Grady’s sixfolds''}}

\author{Annalisa Grossi and Stevell Muller}
\date{ }
\maketitle

\begin{abstract}
In this short note we correct the lattice theoretic classification in \cite{Gr_OG6} of the first named author, including some missing cases and explaining how to recover them.
\end{abstract}

\medskip
\noindent{\textbf{Keywords:} Irreducible holomorphic symplectic manifolds, nonsymplectic automorphisms, effective nonsymplectic isometries}\\

\noindent{\textbf{2020 Mathematics Subject Classification:} 14J42, 14J50}\\

\noindent{\textbf{Correction of: Rev. Mat. Iberoam. (2022) 38(4):1199--1218} https://doi.org/10.4171/RMI/1341}\\

\noindent{\textbf{Published in: Rev. Mat. Iberoam. (2026) 42(4):1589--1598} https://doi.org/10.4171/RMI/1640}

\renewcommand{\thefootnote}{}
\null\footnotetext{A.G. was partially supported by the European Union - NextGenerationEU under the National Recovery and Resilience Plan (PNRR) - Mission 4 Education and research - Component 2 From research to business - Investment 1.1 Notice Prin 2022 - DD N. 104 del 2/2/2022, from title \lq\lq Symplectic varieties: their interplay with Fano manifolds and derived categories\rq\rq, proposal code 2022PEKYBJ – CUP J53D23003840006.
A.G. is member of the INdAM group GNSAGA, and was partially supported by GNSAGA.}
\renewcommand{\thefootnote}{\arabic{footnote}}

%\tableofcontents
%\newpage

\section{Introduction}

In this short note we describe all the cases which are missing in the computation of the invariant and coinvariant sublattices associated to nonsymplectic automorphisms of prime order of hyper-Kähler manifolds of OG6 type \cite{Gr_OG6}. Recall that given a hyper-Kähler manifold $X$ of OG6 type, the second integral cohomology group of $X$, equipped with its Beauville--Bogomolov--Fujiki quadratic form, is isometric as a lattice to

\[\bL := \U^{\oplus3}\oplus [-2]^{\oplus2}.\]

Any finite subgroup $G\subset \bO(\bL)$ is called \textit{effective nonsymplectic} if it arises from representing on \(\bL\) a group of nonsymplectic automorphisms of an OG6 type manifold (see \cite[\S 1.2]{Gr_OG6} for more details). Using the computer program OSCAR \cite{OSCAR-book}, we prove the following result.

\begin{theorem}
    There are exactly 45 conjugacy classes of effective nonsymplectic groups $G\subset \bO(\bL)$ of prime order.
\end{theorem}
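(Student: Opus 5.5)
The plan is to reduce the statement to a lattice-theoretic classification and then carry out that classification with OSCAR. First I will recall the criterion for effectiveness from \cite{Gr_OG6}, which rests on the Mongardi--Rapagnetta--Saccà description of the monodromy group and on the global Torelli theorem for OG6 type manifolds. A group $G=\langle g\rangle\subset \bO(\bL)$ of prime order $p$ is effective nonsymplectic exactly when the following four conditions hold.
\begin{enumerate}
\item $g$ is a monodromy operator. For OG6 type every orientation-preserving isometry is monodromy, so this is a condition on the real spinor norm.
\item The coinvariant lattice $\bL_G:=(\bL^G)^\perp$ has signature $(2,\rk \bL_G-2)$.
\item $g$ acts on $\bL_G\otimes\mathbb{C}$ with the primitive $p$-th roots of unity as its only eigenvalues.
\item For a very general period $\omega$ in the corresponding eigenspace, the invariant lattice $\bL^G$ (which is then the Néron--Severi lattice) contains a $G$-stable chamber. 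Equivalently, no wall divisor of OG6 type (these are known: classes of square $-2$ with suitable divisibility, and classes of square $-4$, $-8$, $-24$ with prescribed divisibility) lies in $\bL_G$.
\end{enumerate}
Since $\rk\bL=8$ and $\varphi(p)$ divides $\rk \bL_G\le 8$, the only possible primes are $p\in\{2,3,5,7\}$.

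For each such $p$ I will enumerate conjugacy classes of isometries of order $p$ of $\bL$ whose coinvariant lattice has signature $(2,*)$. The method is the standard one used by the second author: glue the invariant and coinvariant lattices along $p$-elementary discriminant groups. The coinvariant lattice $C$ is a $\mathbb{Z}[\zeta_p]$-lattice of rank $\varphi(p)m$ with signature $(2,\ast)$. The invariant lattice $F$ is $p$-elementary for $p$ odd, and $2$-elementary for $p=2$. The overlattice $F\oplus C\subset \bL$ is determined by an anti-isometry between subgroups $H_F\subset A_F$ and $H_C\subset A_C$ that commutes with the action of $g$; on $H_C$ the action of $g$ must be trivial, so $H_C$ is $p$-elementary. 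I will list the admissible genera of $F$ and $C$ using Nikulin's conditions together with the discriminant form of $\bL$, which is $(\mathbb{Z}/2)^2$ with form $\langle -\tfrac12\rangle^2$. In OSCAR I then do the following for each genus:
\begin{enumerate}
\item enumerate representatives of the genus of $F$, and of the hermitian genus of $C$ over $\mathbb{Z}[\zeta_p]$;
\item compute the double cosets $\bO(F)\backslash \mathrm{Glue}/\bO(C,g)$ to obtain the isometries up to conjugacy in $\bO(\bL)$;
\item discard classes with the wrong spinor norm;
\item test the wall-divisor condition by computing short vectors in $\bL_G$, which is negative definite away from a rank-2 positive part. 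In practice I test it through the finitely many vectors of the relevant squares and divisibilities in $\bL_G$, using the standard reduction to a definite search.
\end{enumerate}
Because only conjugacy classes of the group $G$ are wanted, I also identify $g$ with its powers $g^k$, $k$ coprime to $p$.

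The hardest step is the final comparison and bookkeeping. The cases missed in \cite{Gr_OG6} most likely come from distinct gluings, or distinct $\bO(\bL)$-conjugacy classes, that share the same pair $(\bL^G,\bL_G)$. So I have to make sure the double-coset computation is complete, and in particular that the images of $\bO(F)$ and of the centraliser of $g$ in $\bO(C)$ in the discriminant groups are computed correctly. I also have to apply the wall-divisor criterion exactly, including the divisibility conditions. Counting the surviving classes over $p=2,3,5,7$ should give $45$. As a sanity check I will compare the result with the $(\bL^G,\bL_G)$ pairs already listed in \cite{Gr_OG6} and with the known geometric realisations.
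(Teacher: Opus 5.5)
\textbf{Verdict: genuine gap.} Your computational framework matches the paper's: hermitian lattices over $\mathbb{Z}[\zeta_p]$, gluing along $p$-elementary groups, double cosets, Brandhorst--Hofmann in OSCAR, and identifying $g$ with its powers instead of fixing a distinguished generator. The problem is the effectiveness criterion you feed into it, which would prevent the count from coming out as $45$.

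Your condition~4 and step~4 import the criterion for \emph{symplectic} groups, where $\bL_G\subset\mathrm{NS}(X)$ is negative definite. The nonsymplectic prime-order case works differently:
\begin{enumerate}
\item $\bL_G\otimes\mathbb{C}$ contains the period. For a very general period in the $\zeta$-eigenspace, $\mathrm{NS}(X)=\bL^G$, so $X$ is projective.
\item $G$ acts trivially on $\bL^G$, so it fixes an ample class.
\item $g$ fixes a positive vector and rotates a positive $2$-plane of $\bL_G\otimes\mathbb{R}$, so $g\in\bO^+(\bL)$ automatically.
\item The Hodge-theoretic Torelli theorem then realises $g$ by an automorphism.
\end{enumerate}
So your conditions~1 and~4 are automatic. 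This is \cite[Proposition 3.4]{Gr_OG6}, on which the paper's proof rests: $G$ is effective nonsymplectic if and only if $\bL^G$ has signature $(1,\ast)$. One then only enumerates conjugacy classes of $g$ with this property plus the signature condition on $\ker\bigl((g+g^{-1})-(\zeta+\zeta^{-1})\id\bigr)$.

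Your ``equivalent'' reformulation (no wall divisor in $\bL_G$) is false, and it discards effective classes. Take No.~1 of Table~\ref{tab:L} with $\bL^G=\langle e+f\rangle$ inside one copy of $\U$. Then $\bL_G=\langle e-f\rangle\oplus\U^{\oplus2}\oplus[-2]^{\oplus2}$ contains primitive vectors of every square, divisibility and discriminant class occurring in $\bL$, so in particular of every type on your wall-divisor list. Moreover, once $\rk\bL_G>2$ the lattice $\bL_G$ is indefinite, so the finite short-vector search you describe does not exist.

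A secondary error: $F=\bL^G$ need not be $p$-elementary, because $\bL$ is not unimodular. Only the glue group $\bL/(F\oplus C)\cong H_F\cong H_C$ is $p$-elementary; $A_F$ also carries a contribution from $A_{\bL}\cong(\mathbb{Z}/2)^2$. Counterexamples from the tables:
\begin{enumerate}
\item in the $|G^\sharp|=2$ block, $\bL^G=[4]$ or $\U(2)\oplus[-4]$;
\item for $p=3$ and $p=7$, $\bL^G=[-2]\oplus[6]$ and $[-2]\oplus[14]$.
\end{enumerate}
If you restricted the genus enumeration as you state, you would lose all $11$ classes with $|G^\sharp|=2$ and all $6$ classes with $p$ odd.
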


\begin{proof}
    According to \cite[Proposition 3.4]{Gr_OG6}, it is enough to classify finite subgroups $G\subset \bO(\bL)$ of prime order whose invariant sublattice $\bL^G$ has signature $(1, \ast)$. We proceed as follows. Fix $p\in\{2,3,5,7\}$ a prime number and $\zeta$ a primitive $p$th root of unity. In order to classify, up to conjugacy, the groups $G$ as before, it is enough to determine representatives of conjugacy classes of isometries $g\in \bO(\bL)$ satisfying both of the following:
    \begin{enumerate}
        \item the invariant lattice $\bL^g$ has signature $(1,\ast)$,
        \item the real quadratic space $\ker\left((g+g^{-1})-(\zeta+\zeta^{-1})\id\right)$ has signature $(2, \ast)$.
    \end{enumerate}
    In fact, condition 2 indicates that we make $g$ (or $g^{-1}$) a distinguished generator of the associated cyclic group. We can thus apply \cite[Algorithms 1, 2, 3 \& 8]{brandhorst_hofmann}\footnote{The use of \cite[Algorithm 8]{brandhorst_hofmann} is necessary for some technical parts of Brandhorst--Hofmann machinery; namely to compute explicit generators for the image of the map $\bO(L, f)\to \bO(L^\sharp, f^\sharp)$, given an even indefinite lattice $L$ of rank larger than 2 and a finite order isometry $f\in \bO(L)$ (see \cite[\S2.1]{Gr_OG6} for the notation).}, implemented in OSCAR \cite{OSCAR} by the second named author, to obtain the results as claimed. We refer to \autoref{tab:L} for the isometry classes of $\bL_G$ and $\bL^G$, as well as the number of conjugacy classes of groups with the same associated pairs $(\bL^G, \bL_G)$, up to isometry.
\end{proof}
The theorem can also be proved by first determining a complete list of possible pairs of invariant~and coinvariant sublattices $(\bL^G, \bL_G)$ associated to effective nonsymplectic groups $G\subset \bO(\bL)$, and applying \cite[Algorithms 2, 3 \& 8]{brandhorst_hofmann} to this list to obtain the number of conjugacy classes with given invariant and coinvariant sublattices. This list of pairs $(\bL^G, \bL_G)$ was partially determined in  \cite{Gr_OG6}. In what follows, we describe the missing and erroneous cases in \textit{loc. cit.}, and we give corrected versions of \cite[Tables 1, 2, 3, 4 \& 5]{Gr_OG6}.

\section{Corrections}
We give a complete list of corrections for \cite[Tables 1, 2, 3, 4, \& 5]{Gr_OG6}. We refer to \cite[Sections 3 \& 4]{Gr_OG6} for more details on how to perform the computations needed to make such corrections, and to the notebook at
\[\text{\url{https://github.com/StevellM/Notebooks/tree/main/Corrigendum_Grossi_RMI_2022}}\]
to see how one can proceed with a computer program. For the sake of completeness, and for the reader's convenience, we reproduce in the next section Tables \ref{tab:Lambda}, \ref{tab:L_G}, \ref{tab:invariante e coinvariante azione non banale su L}, \ref{tab:H2 azione non banale su Ax} and \ref{tab:L} after corrections. In each table, the cases that are missing in \cite{Gr_OG6} appear in a {\color{blue!50!white}blue row}. In \cite[Table 3]{Gr_OG6}, an entry $(S, T)$ has not been detected as arising from an involution of $\bL$: The blue trefoil ``{\color{blue}$\clubsuit$}'' in Table~\ref{tab:invariante e coinvariante azione non banale su L}  reports this missing case. In \cite[Table 5]{Gr_OG6}, there is an erroneous entry: For clarity, we keep it in Table~\ref{tab:L} but make it appear in a {\color{red!50!white}red row}.

\begin{enumerate}

\item In the second part of \cite[Table 1]{Gr_OG6}, the case \(\left(\bLambda_G,\, \bLambda^G\right)=\left(\U^{\oplus3} \oplus [-2],\, \U\oplus [2]\right)\) is missing (see the new entry No.~3 of the second part of \autoref{tab:Lambda}). 
This correction to \cite[Table 1]{Gr_OG6} gives rise to:
\begin{itemize}
    \item the new entry No.~5 \(\left(\bLambda_G,\, S\right)= \left(\U^{\oplus3} \oplus [-2],\, \U \oplus [2] \oplus \A_3(-1)\right)\) in \autoref{tab:L_G},
    \item the new entry No.~4 \((S,\, T)=\left(\U \oplus [2] \oplus \A_3(-1),\, [4]\oplus[-2]\right)\) in \autoref{tab:invariante e coinvariante azione non banale su L}, and
    \item the new entries No.~2 in \autoref{tab:H2 azione non banale su Ax} and in the second part of \autoref{tab:L}.
\end{itemize} 
Note that the lattice \(S=\U \oplus [2] \oplus \A_3(-1) \) is already contained in \cite[Table 2]{Gr_OG6} but it is not reported in \cite[Table 3]{Gr_OG6}.

\item In \cite[Table 2]{Gr_OG6}, the case \(\left(\bLambda_G,\, S\right)= \left(\U^{\oplus3} \oplus [-2]^{\oplus2},\, \U^{\oplus2} \oplus \A_3(-1)\right)\) is missing (see the new entry No.~2 in \autoref{tab:L_G}).
This correction gives rise to:
\begin{itemize}
    \item the new entry No.~2 \((S,\, T)=\left(\U^{\oplus 2} \oplus \A_3(-1),\, [4]\right)\) in \autoref{tab:invariante e coinvariante azione non banale su L}, and
    \item the new entries No.~1 in \autoref{tab:H2 azione non banale su Ax} and in the second part of \autoref{tab:L}.
\end{itemize}

\item In \cite[Table 3]{Gr_OG6}, a possible orthogonal complement to \(S=[2]^{\oplus 2} \oplus [-2]^{\oplus2} \oplus [-4]\) in $\bL$ is given by the entry No.~3. However, there is a missing possibility, given by \(T=\U(2) \oplus [-4]\) (see the new entry No.~6 in \autoref{tab:invariante e coinvariante azione non banale su L}). 

\item In \cite[Table 3]{Gr_OG6}, the entries No.~9 and 10 are duplicates of the entry No.~4: This can be seen by remarking the following isometry of lattices: $[2]\oplus[-4] \cong [-2]\oplus[4]$. These two entries are therefore removed in \autoref{tab:invariante e coinvariante azione non banale su L}, and the new entry No.~7 has been decorated with a trefoil ``$\clubsuit$'', similarly to the entry No.~10 in \cite[Table 3]{Gr_OG6}. 

\item In \cite[Table 3]{Gr_OG6}, the entry No.~14 is missing the trefoil ``$\clubsuit$'' in the last column. Indeed, there exists an involution of $\bL$ with coinvariant and invariant sublattices respectively given by $[2]\oplus [-2]^{\oplus2}\oplus [4]$ and $[2]\oplus [-2]^{\oplus2}\oplus[-4]$ (see the new entry No.~15 in \autoref{tab:invariante e coinvariante azione non banale su L}). This correction gives rise to the new entries No.~6 in \autoref{tab:H2 azione non banale su Ax} and in the second part of  \autoref{tab:L}. 

\item In \cite[Table 3]{Gr_OG6}, the entries No.~15 and 16 are duplicates of the entries No.~14 and 13 respectively: Again, this can be checked using the identification of lattices $[2]\oplus[-4] \cong [-2]\oplus[4]$. These two entries are therefore removed in \autoref{tab:invariante e coinvariante azione non banale su L}.

\item In \cite[Table 3]{Gr_OG6}, the entries No.~19 and 20 are duplicates of the entry No.~17 and 18 respectively: We use, once again, the identification $[2]\oplus[-4] \cong [-2]\oplus[4]$ to detect those duplicates. These two entries are therefore removed in \autoref{tab:invariante e coinvariante azione non banale su L}, and the new entry No.~17 has been decorated with a trefoil ``$\clubsuit$'', similarly to the entry No.~20 in \cite[Table 3]{Gr_OG6}

\item In \cite[Table 3]{Gr_OG6}, possible orthogonal complements to \(S=\U(2) \oplus [4]\) in $\bL$ are given by the entries No.~21 and 22. However, there is a missing possibility, given by \(T=\U(2) \oplus \A_3(-1)\) (see the new entry No.~20 in \autoref{tab:invariante e coinvariante azione non banale su L}). This correction gives rise to the new entries No.~8 in \autoref{tab:H2 azione non banale su Ax} and in the second part of \autoref{tab:L}. 

\item In \cite[Table 3]{Gr_OG6}, possible orthogonal complements to \(S=[2]^{\oplus2} \oplus [-4]\) in $\bL$ are given by the entries No.~23 and 24. However, there are two missing possibilities, given by \(T=\U \oplus \A_3(-1)\) and $T = \U(2)\oplus\A_3(-1)$ (see the new respective entries No.~23 and 24 in \autoref{tab:invariante e coinvariante azione non banale su L}).

\item In \cite[Table 3]{Gr_OG6}, a possible orthogonal complement to  \(S=[2] \oplus [4]\) in $\bL$ is given by the entry No.~27. However, there is a missing possibility, given by \(T=\U  \oplus \A_3(-1)\oplus [-2]\) (see the new entry No.~28 in \autoref{tab:invariante e coinvariante azione non banale su L}). This correction gives rise to the new entries No.~11 in \autoref{tab:H2 azione non banale su Ax} and in the second part of \autoref{tab:L}.

\item In the first part of \cite[Table 5]{Gr_OG6}, the case $\left(\bL_G,\,\bL^G\right) = \left(\U \oplus [2] \oplus [-2]^{\oplus 3},\, \U(2)\right)$ is missing (see the new entry No.~3 in the first part of \autoref{tab:L}). Note that the lattice $\bL_G = \U \oplus [2] \oplus [-2]^{\oplus 3}$ is already contained in the first part of \cite[Table 5]{Gr_OG6}, but the possible orthogonal complement given by $\U(2)$ is missing. 

\item In the first part of \cite[Table 5]{Gr_OG6}, the entry No.~24 is not correct (see the colored entry No.~25 in the first part of \autoref{tab:L}). Indeed, there is no primitive extension $\left([2]^{\oplus2}\right)\oplus \left(\U(2)\oplus \bD_4(-1)\right)\subseteq \U^{\oplus3}\oplus [-2]^{\oplus2}$.

\end{enumerate}

\section{Corrected tables}

\begin{center}
	\begin{longtable}{lllllll}
		
		\caption{Pairs \((\bLambda^G,\bLambda_G)\) for \(G \subset \bO(\bLambda)\) of prime order \(p=2\) and \(\Sgn(\bLambda_G)=(2, \rk(\bLambda_G)-2)\), or \(p=2\) and \(\Sgn(\bLambda_G)=(3, \rk(\bLambda_G)-3)\), or \(p \in \{3,5,7\}\) and \(\Sgn(\bLambda_G)=(2, \rk(\bLambda_G)-2)\).}
	\label{tab:Lambda} \\
		
		\toprule
	 No. & \(|G|\) &  \(\bLambda_G\) & \(\bLambda^G\) & \(\Sgn(\bLambda_G)\) & \(a\) & \(\delta\)  \\

		\midrule
		\endfirsthead
		
		\multicolumn{7}{c}%
		{\tablename\ \thetable{}, follows from previous page} \\
		\midrule
	No. & \(|G|\) &  \(\bLambda_G\) & \(\bLambda^G\) & \(\Sgn(\bLambda_G)\) & \(a\) & \(\delta\)  \\
		\midrule
		\endhead
	
		\multicolumn{7}{c}{Continues on next page} \\
		\endfoot
		
		\bottomrule
		\endlastfoot

		\(1\) & \(2\) & \(\U^{\oplus 2} \oplus [-2 ] ^{\oplus 3 }\)& \([ 2 ] ^{\oplus 3 }\)& \((2,5)\) & \(3\) & \(1\) \\
		\(2\) & \(2\) & \(\U \oplus [ -2 ] ^{\oplus 3 } \oplus [2 ]\) & \([ 2 ] ^{\oplus 3} \oplus [ -2 ]\) &  \((2,4)\) & \(4\) & \(1\) \\
		\(3\) & \(2\)  & \(\U^{\oplus 2} \oplus [ -2 ] ^{\oplus 2 }\) & \(\U \oplus [ 2 ] ^{\oplus 2}\)&  \((2,4)\) & \(2\) & \(1\) \\
		\(4\) & \(2\)  & \([2]^{\oplus 2} \oplus [ -2 ] ^{\oplus 3}\) & \([ -2]^{\oplus 2} \oplus [2 ] ^{\oplus 3}\)  & \((2,3)\) & \(5\) & \(1\) \\
		\(5\) & \(2\)  & \(\U \oplus  [ -2 ] ^{\oplus 2 } \oplus [ 2 ]\) & \(\U \oplus [2 ] ^{\oplus 2 } \oplus [-2 ]\) &  \((2,3)\) & \(3\) & \(1\) \\
		\(6\) & \(2\) & \(\U^{\oplus 2} \oplus [ -2 ]\) & \(\U^{\oplus 2} \oplus [ 2 ]\) &  \((2,3)\) & \(1\) & \(1\) \\
		\(7\) & \(2\)  & \([ 2 ] ^{\oplus 2} \oplus [ -2 ] ^{\oplus 2}\) & \(\U \oplus [ 2 ] ^{\oplus 2} \oplus [ -2 ]^{\oplus 2}\) &  \((2,2)\) & \(4\) & \(1\) \\
		\(8\) & \(2\) & \(\U(2)^{\oplus 2}\)  & \(\U \oplus \U(2)^{\oplus 2}\)&  \((2,2)\) & \(4\) & \(0\) \\
		\(9\) & \(2\) & \(\U \oplus [ 2 ] \oplus[ -2 ]\) & \(\U^{\oplus 2} \oplus [ 2 ] \oplus [ -2 ]\) &  \((2,2)\) & \(2\) & \(1\) \\
		\(10\) & \(2\) & \(\U \oplus \U(2)\) & \(\U^{\oplus 2} \oplus \U(2)\) &  \((2,2)\) & \(2\) & \(0\) \\
		\(11\) & \(2\) & \(\U ^{\oplus 2}\)& \(\U^{\oplus 3}\) &  \((2,2)\) & \(0\) & \(0\) \\
		\(12\) & \(2\) & \([2 ]^{\oplus 2} \oplus [ -2 ]\) & \(\U^{\oplus 2} \oplus [-2]^{\oplus 2} \oplus [2]\) &  \((2,1)\) & \(3\) & \(1\) \\
	    \(13\) & \(2\) & \(\U \oplus [2]\) & \(\U^{\oplus 3} \oplus [ -2 ]\) &  \((2,1)\) & \(1\) & \(1\) \\
		\(14\) & \(2\) & \([ 2 ]^{ \oplus 2}\)& \(\U^{\oplus 3} \oplus [-2 ] ^{\oplus 2}\) &  \((2,0)\) & \(2\) & \(1\) \\
		\midrule
		\(1\) & \(2\) & \(\U^{\oplus 3} \oplus [ -2] ^{\oplus 2}\) & \([2]^{\oplus 2}\) & \((3,5)\) & \(2\) & \(1\) \\
		\(2\) & \(2\) & \(\U^{\oplus 2} \oplus [2] \oplus [-2]^{\oplus2}\) & \([2]^{\oplus2} \oplus [-2]\) & \((3,4)\) & \(3\) & \(1\) \\
        \rowcolor{blue!20!white}\(3\) & \(2\) & \(\U^{\oplus 3}\oplus [-2]\) & \(\U\oplus [2]\) & \((3,4)\) & \(1\) & \(1\)\\
		\(4\) & \(2\) & \(\U  \oplus [ 2 ] ^{\oplus 2 } \oplus [ -2] ^{\oplus 2}\) & \([ 2 ] ^{\oplus 2 } \oplus [ -2 ] ^{\oplus 2}\) &  \((3,3)\) & \(4\) & \(1\) \\
		\(5\) & \(2\) & \(\U \oplus \U(2)^{\oplus 2}\) & \(\U(2)^{\oplus2}\) &  \((3,3)\) & \(4\) & \(0\) \\
		\(6\) & \(2\) & \(\U^{\oplus 2} \oplus [ 2 ] \oplus [ -2 ]\) & \(\U  \oplus [ 2 ] \oplus [ -2 ]\) &  \((3,3)\) & \(2\) & \(1\) \\
		\(7\) & \(2\) & \(\U^{\oplus 2} \oplus \U(2)\) & \(\U \oplus \U(2)\) &  \((3,3)\) & \(2\) & \(0\) \\
    	\(8\) & \(2\) & \(\U ^{ \oplus 3}\)& \(\U^{\oplus 2}\) &  \((3,3)\) & \(0\) & \(0\) \\
		\(9\) & \(2\) & \([ -2 ] ^{\oplus 2} \oplus[ 2 ] ^{\oplus 3}\)& \([ 2 ] ^{\oplus 2} \oplus [ -2]^{\oplus 3}\) &  \((3,2)\) & \(5\) & \(1\) \\
		\(10\) & \(2\)  & \(\U \oplus [ -2 ]  \oplus [ 2 ] ^{\oplus 2}\) & \(\U \oplus [2 ] \oplus [ -2 ] ^{\oplus 2}\) &  \((3,2)\) & \(3\) & \(1\) \\
		\(11\) & \(2\)  & \(\U^{\oplus 2} \oplus [2] \)  & \(\U^{\oplus 2} \oplus [-2 ]\) &  \((3,2)\) & \(1\) & \(1\) \\
		\(12\) & \(2\) & \(\U(2)\oplus [2 ] ^{\oplus 2}\) & \(\U \oplus \U(2)\oplus [-2] ^{\oplus 2}\) &  \((3,1)\) & \(4\) & \(1\) \\
		\(13\) & \(2\)  & \(\U \oplus [ 2 ]^{\oplus 2}\)& \(\U^{\oplus 2} \oplus [ -2] ^{\oplus 2}\) &  \((3,1)\) & \(2\) & \(1\) \\
		\(14\) & \(2\) & \([ 2 ]^{\oplus 3}\)& \(\U^{\oplus 2} \oplus  [ -2 ] ^{\oplus 3}\) &  \((3,0)\) & \(3\) & \(1\) \\
		\midrule
		\(1\) & \(3\) & \(\U^{\oplus 2} \oplus \A_{2}(-1)\) & \(\U \oplus \A_{2}\) & \((2,4)\) & \(1\) &-- \\
		\(2\) & \(3\) & \(\U \oplus \U(3) \oplus \A_{2}(-1)\) & \(\U(3) \oplus \A_{2}\) & \((2,4)\) & \(3\) &-- \\
		\(3\) & \(3\) & \(\U^{\oplus 2} \) & \(\U^{\oplus3}\) & \((2,2)\) & \(0\) &-- \\
		\(4\) & \(3\) & \(\U \oplus \U(3) \) & \(\U^{\oplus 2} \oplus \U(3)\) & \((2,2)\) & \(2\) &-- \\
		\(5\) & \(3\)  & \(\A_{2}\) & \(\U^{\oplus 3} \oplus \A_{2}(-1)\) & \((2,0)\) & \(1\) & --  \\
		\midrule
		\(1\) & \(5\) & \(\U \oplus \bH_{5}\) & \(\U^{\oplus 2} \oplus \bH_{5}\) & \((2,2)\) & \(1\) & --\\
		\midrule
	    \(1\) & \(7\) & \(\U^{\oplus 2} \oplus \bK_{7}\) & \(\U \oplus \bK_{7}(-1)\) & \((2,4)\) & \(1\) &-- \\
	\end{longtable}
\end{center}

\begin{center}
	\begin{longtable}{lll}
		
		\caption{Orthogonal complements of \([4] \hookrightarrow \bLambda_G\).} 
		\label{tab:L_G}\\
		\toprule
	No. & \(\bLambda_G\) & \(S=[4]^{\perp_{\Lambda_G}}\) \\

		\midrule
		\endfirsthead
		
		\multicolumn{3}{c}%
		{\tablename\ \thetable{}, follows from previous page} \\
		\midrule
	No. & \(\bLambda_G\) & \(S=[4]^{\perp_{\Lambda_G}}\) \\
		\midrule
		\endhead

		\multicolumn{3}{c}{Continues on next page} \\
		\endfoot
		
		\bottomrule
		\endlastfoot

		\(1\) & \(\U^{\oplus3} \oplus [-2]^{\oplus2}\) & \(\U^{\oplus2} \oplus [-2]^{\oplus2} \oplus [-4]\) \\
		
        \rowcolor{blue!20!white}\(2\) & \(\U^{\oplus3} \oplus [-2]^{\oplus2}\) & \(\U^{\oplus2} \oplus \A_3(-1)\) \\
        \midrule
		\(3\) & \(\U^{\oplus2} \oplus [2] \oplus [-2]^{\oplus2}\) & \(\U \oplus [2] \oplus \A_3(-1)\) \\
		\(4\) & \(\U^{\oplus2} \oplus [2] \oplus [-2]^{\oplus2}\) & \(\U \oplus [2] \oplus [-2]^{\oplus2} \oplus [-4]\) \\
		\midrule
        \rowcolor{blue!20!white}\(5\) & \(\U^{\oplus 3}\oplus [-2]\) & $\U \oplus [2] \oplus \A_3(-1)$\\
        \midrule
		\(6\) & \(\U \oplus [2]^{\oplus2} \oplus [-2]^{\oplus2}\) & \([2]^{\oplus2} \oplus [-2]^{\oplus2} \oplus [-4]\) \\
		\(7\) & \(\U \oplus [2]^{\oplus2} \oplus [-2]^{\oplus2}\) & \(\U \oplus [-2]^{\oplus2} \oplus [4]\) \\
		\midrule
		\(8\) & \(\U \oplus \U(2)^{\oplus2}\) & \(\U(2)^{\oplus2} \oplus [-4]\) \\
		\(9\) & \(\U \oplus \U(2)^{\oplus2}\) & \(\U \oplus \U(2)\oplus [-4]\) \\
		\midrule
		\(10\) & \(\U^{\oplus2} \oplus[2] \oplus [-2]\) & \(\U  \oplus [2] \oplus [-2] \oplus [-4]\) \\
		\midrule
		\(11\) & \(\U^{\oplus 2} \oplus \U(2)\) & \(\U\oplus \U(2) \oplus [-4]\)  \\
		\(12\) & \(\U^{\oplus 2} \oplus \U(2)\) & \(\U^{\oplus2} \oplus [-4]\)  \\
		\midrule
		\(13\) & \(\U^{\oplus3}\) & \(\U^{\oplus2} \oplus [-4]\) \\
		\midrule
		\(14\) & \([2]^{\oplus3} \oplus  [-2]^{\oplus2}\) & \( [2] \oplus [-2]^{\oplus 2} \oplus  [4]\) \\
		\midrule
		\(15\) & \(\U\oplus [2]^{\oplus2}  \oplus [-2]\) & \( [2]^{\oplus2} \oplus [-2] \oplus [-4]\) \\
		\(16\) & \(\U  \oplus [2]^{\oplus2} \oplus [-2]\) & \(\U \oplus[-2] \oplus [4]\) \\
		\midrule
		\(17\) & \(\U^{\oplus2} \oplus [2]\) & \(\U \oplus[2] \oplus [-4]\) \\
		\midrule
		\(18\) & \(\U(2) \oplus [2]^{\oplus2}\) & \(\U(2) \oplus [4]\) \\
		\(19\) & \(\U(2) \oplus [2]^{\oplus 2}\) & \([2]^{\oplus2} \oplus [-4]\) \\
		\midrule
		\(20\) & \(\U \oplus [2]^{\oplus2}\) & \([2]^{\oplus2} \oplus [-4]\) \\
		\(21\) & \(\U \oplus [2]^{\oplus2}\) & \(\U \oplus [4]\) \\
		\midrule
		\(22\) & \([2]^{\oplus3}\) & \([2] \oplus [4]\) \\

	\end{longtable}	
\end{center}

\begin{center}
	\begin{longtable}{llll}
		\caption{Orthogonal complements of \(S \hookrightarrow \bL\).}
		\label{tab:invariante e coinvariante azione non banale su L}\\
	
		\toprule
		No. & \(S\) & \(T\) & \(\clubsuit\) \\

		\midrule
		\endfirsthead
		
		\multicolumn{4}{c}%
		{\tablename\ \thetable{}, follows from previous page} \\
		\midrule
		No. & \(S\) & \(T\) & \(\clubsuit\) \\
		\midrule
		\endhead
		
		\midrule
		\multicolumn{4}{c}{Continues on next page} \\
		\endfoot
		
		\bottomrule
		\endlastfoot

		\(1\) & \(\U^{\oplus2}  \oplus [-2]^{\oplus2} \oplus [-4]\) & \([4]\) & --\\
        \rowcolor{blue!20!white}\(2\) & $\U^{\oplus2}\oplus \A_3(-1)$ & \([4]\) & $\clubsuit$\\
		\(3\) & \(\U \oplus [2] \oplus [-2]^{\oplus2} \oplus [-4]\) & \([4] \oplus [-2]\) & --\\
        \rowcolor{blue!20!white}\(4\) & $\U\oplus [2]\oplus \A_3(-1)$ & \([4]\oplus[-2]\) & $\clubsuit$\\
		\(5\)  & \([2]^{\oplus2} \oplus [-2]^{\oplus2} \oplus [-4]\) & \( [-2]^{\oplus 2}\oplus[4]\) & --\\
        \rowcolor{blue!20!white}\(6\)  & \([2]^{\oplus2} \oplus [-2]^{\oplus2} \oplus [-4]\) & \(\U(2) \oplus [-4]\) & --\\
		\(7\) & \(\U \oplus [-2]^{\oplus2}  \oplus [4]\) & \([2] \oplus [-2]\oplus [-4]\) & ${\clubsuit}$\\
        \(8\) & \(\U \oplus [-2]^{\oplus2}  \oplus [4]\) & \(\U\oplus [-4]\) &--\\
		\(9\) & \(\U \oplus [-2]^{\oplus2}  \oplus [4]\) & \(\U(2) \oplus [-4]\) &--\\
		\(10\) & \(\U \oplus \U(2) \oplus [-4]\) & \(\U(2) \oplus [-4]\) & \(\clubsuit\) \\
		\(11\) & \(\U \oplus \U(2) \oplus [-4]\) & \([-2]^{\oplus2}\oplus[4]\) & -- \\
		\(12\) & \(\U^{\oplus2} \oplus [-4]\) & \( [-2]^{\oplus2}\oplus[4]\) & -- \\
		\(13\) & \(\U^{\oplus2} \oplus [-4]\) & \(\U \oplus [-4]\) & \(\clubsuit\) \\
		\(14\) & \([2] \oplus [-2]^{\oplus2} \oplus [4]\) & \(\U \oplus [-2] \oplus [-4]\) & --\\
		\(15\) &\([2] \oplus [-2]^{\oplus2} \oplus [4]\) & \([2] \oplus [-2]^{\oplus2} \oplus [-4]\) & {\color{blue}$\clubsuit$}\\
		\(16\) & \(\U \oplus[-2] \oplus [4]\) &  \([2] \oplus [-2]^{\oplus2} \oplus [-4]\) & --\\
		\(17\) & \(\U \oplus[-2] \oplus [4]\) & \(\U \oplus [-2] \oplus [-4]\) & \(\clubsuit\) \\
		\(18\) & \(\U(2) \oplus [4]\) & \(\U(2) \oplus[-2]^{\oplus2} \oplus[-4]\) & --\\
		\(19\) & \(\U(2) \oplus [4]\) & \(\U\oplus [-2]^{\oplus2} \oplus [-4]\) & --\\
       \rowcolor{blue!20!white}\(20\) & \(\U(2) \oplus [4]\) & \(\U(2)\oplus \A_3(-1)\) & $\clubsuit$\\
		\(21\) & \([2]^{\oplus2} \oplus [-4]\) & \([-2]^{\oplus4} \oplus [4]\) & --\\
		\(22\) & \([2]^{\oplus2} \oplus [-4]\) & \(\U\oplus [-2]^{\oplus2} \oplus [-4]\) & \(\clubsuit\) \\
        \rowcolor{blue!20!white}\(23\) & \([2]^{\oplus2} \oplus [-4]\) & \(\U\oplus \A_3(-1)\) & --\\
        \rowcolor{blue!20!white}\(24\) & \([2]^{\oplus2} \oplus [-4]\) & \(\U(2)\oplus \A_3(-1)\) & --\\
		\(25\) & \(\U \oplus [4]\) & \(\U \oplus[-2]^{\oplus2} \oplus [-4]\) & --\\
		\(26\) & \(\U \oplus [4]\) & \(\U \oplus \A_{3}(-1)\) & \(\clubsuit\) \\
		\(27\) & \([2] \oplus [4]\) & \(\U \oplus [-2]^{\oplus3} \oplus[-4]\) & --\\
        \rowcolor{blue!20!white}\(28\) & \([2] \oplus [4]\) & \(\U \oplus \A_3(-1) \oplus[-2]\) & $\clubsuit$\\
	\end{longtable}	
\end{center}

\begin{center}
	\begin{longtable}{llll}
			\caption{Invariant and coinvariant sublattices of nonsymplectic groups \(G \subset \bO(\bL)\) of order \(2\) and \(|G^{\sharp}|=2\) on manifolds of \(\OG6\) type.} \\
           \label{tab:H2 azione non banale su Ax} \\
	
		\toprule
		No. & \(\bL_G\) & \(\bL^G\) & example  \\

		\midrule
		\endfirsthead
		
		\multicolumn{4}{c}%
		{\tablename\ \thetable{}, follows from previous page} \\
		\midrule
		No. & \(\bL_G\) & \(\bL^G\) & example \\
		\midrule
		\endhead
		
		\midrule
		\multicolumn{4}{c}{Continues on next page} \\
		\endfoot
		
		\bottomrule
		\endlastfoot

        \rowcolor{blue!20!white}\(1\) & \(\U^{\oplus2} \oplus \A_3(-1)\) & \( [4]\) & {\tiny \(\begin{colorpmatrix} -1 & 0 & 0 & 0 & 0 & 0 & 0 & 0 \\ 0 & -1 & 0 & 0 & 0 & 0 & 0 & 0 \\ 0 & 0 & -1 & 0 & 0 & 0 & 0 & 0 \\ 0 & 0 & 0 & -1 & 0 & 0 & 0 & 0 \\ 0 & 0 & 0 & 0 & 1 & 2 & 1 & 1 \\ 0 & 0 & 0 & 0 & 2 & 1 & 1 & 1 \\ 0 & 0 & 0 & 0 & -2 & -2 & -2 & -1 \\ 0 & 0 & 0 & 0 & -2 & -2 & -1 & -2 \end{colorpmatrix}\)}  \\
       \rowcolor{blue!20!white}\(2\) & \(\U \oplus [2]\oplus \A_3(-1)\) & \( [4]\oplus [-2]\) & {\tiny \(\begin{colorpmatrix} -1 & 0 & 0 & 0 & 0 & 0 & 0 & 0 \\ 0 & -1 & 0 & 0 & 0 & 0 & 0 & 0 \\ 0 & 0 & 0 & -1 & 0 & 0 & 0 & 0 \\ 0 & 0 & -1 & 0 & 0 & 0 & 0 & 0 \\ 0 & 0 & 0 & 0 & 1 & 2 & 1 & 1 \\ 0 & 0 & 0 & 0 & 2 & 1 & 1 & 1 \\ 0 & 0 & 0 & 0 & -2 & -2 & -2 & -1 \\ 0 & 0 & 0 & 0 & -2 & -2 & -1 & -2 \end{colorpmatrix}\)}  \\
		\(3\) & \(\U \oplus \U(2) \oplus [-4]\) & \(\U(2) \oplus [-4]\) & {\tiny \(\begin{pmatrix} -1 & 0 & 0 & 0 & 0 & 0 & 0 & 0 \\ 0 & -1 & 0 & 0 & 0 & 0 & 0 & 0 \\ 0 & 0 & 0 & 0 & 1 & 0 & 0 & 0 \\ 0 & 0 & 0 & 0 & 0 & 1 & 0 & 0 \\ 0 & 0 & 1 & 0 & 0 & 0 & 0 & 0 \\ 0 & 0 & 0 & 1 & 0 & 0 & 0 & 0 \\ 0 & 0 & 0 & 0 & 0 & 0 & 0 & 1 \\ 0 & 0 & 0 & 0 & 0 & 0 & 1 & 0 \end{pmatrix}\)} \\
		\(4\) & \(\U \oplus [-2]^{\oplus2}  \oplus [4]\) & \([2] \oplus [-2]\oplus [-4]\) &   {\tiny \(\begin{pmatrix} -1 & 0 & 0 & 0 & 0 & 0 & 0 & 0 \\ 0 & -1 & 0 & 0 & 0 & 0 & 0 & 0 \\ 0 & 0 & 0 & 1 & 0 & 0 & 0 & 0 \\ 0 & 0 & 1 & 0 & 0 & 0 & 0 & 0 \\ 0 & 0 & 0 & 0 & 0 & -1 & 0 & 0 \\ 0 & 0 & 0 & 0 & -1 & 0 & 0 & 0 \\ 0 & 0 & 0 & 0 & 0 & 0 & 0 & 1 \\ 0 & 0 & 0 & 0 & 0 & 0 & 1 & 0 \end{pmatrix}\)}  \\
		\(5\) & \(\U^{\oplus2} \oplus [-4]\) & \(\U \oplus [-4]\) &  {\tiny \(\begin{pmatrix} -1 & 0 & 0 & 0 & 0 & 0 & 0 & 0 \\ 0 & -1 & 0 & 0 & 0 & 0 & 0 & 0 \\ 0 & 0 & -1 & 0 & 0 & 0 & 0 & 0 \\ 0 & 0 & 0 & -1 & 0 & 0 & 0 & 0 \\ 0 & 0 & 0 & 0 & 1 & 0 & 0 & 0 \\ 0 & 0 & 0 & 0 & 0 & 1 & 0 & 0 \\ 0 & 0 & 0 & 0 & 0 & 0 & 0 & 1 \\ 0 & 0 & 0 & 0 & 0 & 0 & 1 & 0 \end{pmatrix}\)} \\
        \rowcolor{blue!20!white}\(6\) &\([2] \oplus [-2]^{\oplus2} \oplus [4]\) & \([2] \oplus [-2]^{\oplus2} \oplus [-4]\) &  {\tiny \(\begin{colorpmatrix} 0 & 0 & 1 & 0 & 0 & 0 & 0 & 0 \\ 0 & 0 & 0 & 1 & 0 & 0 & 0 & 0 \\ 1 & 0 & 0 & 0 & 0 & 0 & 0 & 0 \\ 0 & 1 & 0 & 0 & 0 & 0 & 0 & 0 \\ 0 & 0 & 0 & 0 & 0 & -1 & 0 & 0 \\ 0 & 0 & 0 & 0 & -1 & 0 & 0 & 0 \\ 0 & 0 & 0 & 0 & 0 & 0 & 0 & 1 \\ 0 & 0 & 0 & 0 & 0 & 0 & 1 & 0 \end{colorpmatrix}\)} \\
	   \(7\) & \(\U \oplus[-2] \oplus [4]\) & \(\U \oplus [-2] \oplus [-4]\) & {\tiny \(\begin{pmatrix} 1 & 0 & 0 & 0 & 0 & 0 & 0 & 0 \\ 0 & 1 & 0 & 0 & 0 & 0 & 0 & 0 \\ 0 & 0 & -1 & 0 & 0 & 0 & 0 & 0 \\ 0 & 0 & 0 & -1 & 0 & 0 & 0 & 0 \\ 0 & 0 & 0 & 0 & 0 & -1 & 0 & 0 \\ 0 & 0 & 0 & 0 & -1 & 0 & 0 & 0 \\ 0 & 0 & 0 & 0 & 0 & 0 & 0 & 1 \\ 0 & 0 & 0 & 0 & 0 & 0 & 1 & 0 \end{pmatrix}\)} \\
       \rowcolor{blue!20!white}\(8\) & \(\U(2) \oplus [4]\) & \(\U(2) \oplus \A_3(-1)\) & {\tiny \(\begin{colorpmatrix} 0 & 0 & 1 & 0 & 0 & 0 & 0 & 0 \\ 0 & 0 & 0 & 1 & 0 & 0 & 0 & 0 \\ 1 & 0 & 0 & 0 & 0 & 0 & 0 & 0 \\ 0 & 1 & 0 & 0 & 0 & 0 & 0 & 0  \\ 0 & 0 & 0 & 0 & -1 & -2 & -1 & -1 \\ 0 & 0 & 0 & 0 & -2 & -1 & -1 & -1 \\ 0 & 0 & 0 & 0 & 2 & 2 & 2 & 1 \\ 0 & 0 & 0 & 0 & 2 & 2 & 1 & 2 \end{colorpmatrix}\)} \\
	    \(9\) & \([2]^{\oplus2} \oplus [-4]\) & \(\U\oplus [-2]^{\oplus2} \oplus [-4]\) & {\tiny \(\begin{pmatrix} 1 & 0 & 0 & 0 & 0 & 0 & 0 & 0 \\ 0 & 1 & 0 & 0 & 0 & 0 & 0 & 0 \\ 0 & 0 & 0 & -1 & 0 & 0 & 0 & 0 \\ 0 & 0 & -1 & 0 & 0 & 0 & 0 & 0 \\ 0 & 0 & 0 & 0 & 0 & -1 & 0 & 0 \\ 0 & 0 & 0 & 0 & -1 & 0 & 0 & 0 \\ 0 & 0 & 0 & 0 & 0 & 0 & 0 & 1 \\ 0 & 0 & 0 & 0 & 0 & 0 & 1 & 0 \end{pmatrix}\)} \\
		\(10\) & \(\U \oplus [4]\) & \(\U \oplus \A_{3}(-1)\) &  {\tiny \(\begin{pmatrix} 1 & 0 & 0 & 0 & 0 & 0 & 0 & 0 \\ 0 & 1 & 0 & 0 & 0 & 0 & 0 & 0 \\ 0 & 0 & -1 & 0 & 0 & 0 & 0 & 0 \\ 0 & 0 & 0 & -1 & 0 & 0 & 0 & 0 \\ 0 & 0 & 0 & 0 & -1 & -2 & -1 & -1 \\ 0 & 0 & 0 & 0 & -2 & -1 & -1 & -1 \\ 0 & 0 & 0 & 0 & 2 & 2 & 2 & 1 \\ 0 & 0 & 0 & 0 & 2 & 2 & 1 & 2 \end{pmatrix}\)} \\
        \rowcolor{blue!20!white}\(11\) & \([2] \oplus [4]\) & \(\U\oplus \A_{3}(-1) \oplus [-2]\) &  {\tiny \(\begin{colorpmatrix} 1 & 0 & 0 & 0 & 0 & 0 & 0 & 0 \\ 0 & 1 & 0 & 0 & 0 & 0 & 0 & 0 \\ 0 & 0 & 0 & -1 & 0 & 0 & 0 & 0 \\ 0 & 0 & -1 & 0 & 0 & 0 & 0 & 0 \\ 0 & 0 & 0 & 0 & -1 & -2 & -1 & -1 \\ 0 & 0 & 0 & 0 & -2 & -1 & -1 & -1 \\ 0 & 0 & 0 & 0 & 2 & 2 & 2 & 1 \\ 0 & 0 & 0 & 0 & 2 & 2 & 1 & 2 \end{colorpmatrix}\)} \\
\end{longtable}
\end{center}

\begin{center}
	\begin{longtable}{lllllr}
		
	\caption{Invariant and coinvariant sublattices of effective nonsymplectic groups \(G \subset \bO(\bL)\) on manifolds of \(\OG6\) type.}
		\label{tab:L} \\
		
		\toprule
	No. & \(|G|\) & \(|G^{\sharp}|\) & \(\bL_G\) & \(\bL^G\) & number of classes \\

	\midrule
	\endfirsthead
	
	\multicolumn{5}{c}%
	{\tablename\ \thetable{}, follows from previous page} \\
	\midrule
		No. & \(|G|\) & \(|G^{\sharp}|\) & \(\bL_G\) & \(\bL^G\) & number of classes \\
	\midrule
	\endhead
	
	\midrule
	\multicolumn{5}{c}{Continues on next page} \\
	\endfoot
	
	\bottomrule
	\endlastfoot
	
	\(1\) & \(2\) & \(1\) & \(\U^{\oplus 2} \oplus [ -2 ]^{\oplus 3 }\)& \([2]\) & 1\\
	\(2\) & \(2\)  & \(1\) & \(\U \oplus [2] \oplus [-2] ^{\oplus 3 }\) & \([2] \oplus [-2]\) & 1\\
   \rowcolor{blue!20!white}\(3\) & \(2\)  & \(1\) & \(\U \oplus [2] \oplus [-2] ^{\oplus 3 }\) & \(\U(2)\) & 1\\
	\(4\) & \(2\) & \(1\) & \(\U^{\oplus 2} \oplus [-2 ] ^{\oplus 2 }\) & \(\U\) & 1\\
	\(5\) & \(2\) & \(1\)  & \(\U^{\oplus 2} \oplus [-2 ] ^{\oplus 2 }\) & \([ 2 ] \oplus [-2]\) & 1\\
	\(6\) & \(2\) & \(1\) & \(\U^{\oplus 2} \oplus [-2 ] ^{\oplus 2 }\) & \(\U(2)\) & 1\\
	\(7\) & \(2\) & \(1\)  & \([2]^{\oplus2} \oplus [-2]^{\oplus3}\) & \([2] \oplus[-2]^{\oplus2}\) & 1\\
	\(8\) & \(2\) & \(1\) & \(\U \oplus [-2] ^{\oplus 2 } \oplus [2]\) & \(\U \oplus [-2]\) & 1\\
	\(9\) & \(2\) & \(1\) & \(\U \oplus [-2] ^{\oplus 2 } \oplus [2]\) & \([2] \oplus [-2]^{\oplus 2}\)  & 3\\
	\(10\) & \(2\) & \(1\) & \(\U^{\oplus 2} \oplus [-2]\) & \([-2] ^{\oplus 2} \oplus [2]\) & 1\\
	\(11\) & \(2\) & \(1\) & \(\U^{\oplus 2} \oplus [-2]\) & \(\U \oplus [-2]\) & 1\\
	\(12\) & \(2\) & \(1\) & \([2] ^{\oplus 2} \oplus [-2] ^{\oplus 2}\) & \(\U \oplus [-2] ^{\oplus 2}\) & 1\\
  \(13\) & \(2\) & \(1\)  & \([2] ^{\oplus 2} \oplus[-2] ^{\oplus 2}\) & \([-2] ^{\oplus 3} \oplus [2]\)  & 2\\
	\(14\) & \(2\) & \(1\) & \(\U(2)^{\oplus 2}\)  & \(\U(2) \oplus [-2]^{\oplus2}\) & 1\\
			\(15\) & \(2\) & \(1\)  & \(\U \oplus [2] \oplus [-2]\) & \([2] \oplus [-2]^{\oplus 3}\)  & 1\\
			\(16\) & \(2\) & \(1\) & \(\U \oplus [2] \oplus [-2]\) & \(\U \oplus [-2] ^{\oplus 2}\) & 1\\
			
			\(17\) & \(2\) & \(1\) & \(\U \oplus \U(2)\) & \(\U(2) \oplus [-2] ^{\oplus 2}\)  & 1\\
			\(18\) & \(2\) & \(1\) & \(\U \oplus \U(2)\) & \(\U \oplus [-2] ^{ \oplus 2}\) & 1 \\
			
			\(19\) & \(2\) & \(1\) & \(\U ^{\oplus 2}\)& \(\U \oplus [-2] ^{\oplus 2}\) & 1 \\
			
			\(20\) & \(2\) & \(1\)  & \([2]^{\oplus 2} \oplus [-2]\) & \([-2]^{\oplus 4} \oplus [2]\) & 1\\
			\(21\) & \(2\) & \(1\) & \([2]^{\oplus 2} \oplus [-2]\) & \(\U \oplus [-2]^{\oplus 3}\)& 2 \\
			
			\(22\) & \(2\) & \(1\) & \(\U \oplus [2]\) & \(\U \oplus [-2] ^{\oplus 3}\) & 1 \\
			
			\(23\) & \(2\) & \(1\) & \([2]^{ \oplus 2}\)& \(\U \oplus[-2] ^{\oplus 4}\) & 1\\
			\(24\) & \(2\) & \(1\) & \([2]^{ \oplus 2}\)& \(\U \oplus \bD_{4}(-1)\) & 1 \\
            \rowcolor{red!20!white}$25$ & \(2\)  & \(1\) & \([2]^{\oplus2}\) & \(\U(2) \oplus \bD_{4}(-1)\) &0 \\
			\midrule 

            \rowcolor{blue!20!white}\(1\) & \(2\)  & \(2\) & \(\U^{\oplus2}\oplus \A_3(-1)\) & \([4]\) & 1\\
            \rowcolor{blue!20!white}\(2\) & \(2\)  & \(2\) & \(\U\oplus [2]\oplus \A_3(-1)\) & \([4]\oplus [-2]\) & 1\\
    		\(3\) & \(2\)  & \(2\) & \(\U \oplus \U(2) \oplus [-4]\) & \(\U(2) \oplus [-4]\)& 1 \\
			\(4\) & \(2\)  & \(2\) & \(\U \oplus [2] \oplus [-2] \oplus [-4]\) & \([2] \oplus [-2] \oplus [-4]\)& 1 \\
			\(5\) & \(2\)  & \(2\) & \(\U^{\oplus2} \oplus [-4]\) & \(\U \oplus [-4]\) & 1\\
            \rowcolor{blue!20!white}\(6\) & \(2\)  & \(2\) & \([2] \oplus [-2]^{\oplus2} \oplus [4]\) & \([2] \oplus [-2]^{\oplus2} \oplus [-4]\)& 1\\
			\(7\) & \(2\)  & \(2\) & \(\U \oplus [2] \oplus [-4]\) & \(\U \oplus [-2] \oplus [-4]\) & 1\\
            \rowcolor{blue!20!white}\(8\) & \(2\)  & \(2\) & \(\U(2)\oplus [4]\) & \(\U(2)\oplus A_3(-1)\)& 1 \\
			\(9\) & \(2\)  & \(2\) & \([2]^{\oplus2}\oplus [-4]\) & \(\U \oplus [-2]^{\oplus2} \oplus [-4]\)& 1 \\
			\(10\) & \(2\)  & \(2\) & \(\U \oplus [4]\) & \(\U \oplus \A_3(-1)\)& 1 \\
            \rowcolor{blue!20!white}\(11\) & \(2\)  & \(2\) & \([2]\oplus [4]\) & \(\U\oplus \A_3(-1)\oplus [-2]\)& 1 \\
			\midrule
			
			\(1\) & \(3\) & \(1\) & \(\U^{\oplus2} \oplus \A_{2}(-1)\) & \([-2] \oplus [6]\) & 1\\
			\(2\)  & \(3\) & \(1\) & \(\U^{\oplus2}\) & \(\U \oplus[-2]^{\oplus2}\) & 1\\
			\(3\) & \(3\) & \(1\) & \(\U \oplus \U(3)\) & \(\U(3) \oplus [-2]^{\oplus2}\)& 1 \\
			\(4\) & \(3\) & \(1\) & \(\A_{2}\) & \(\U \oplus \A_{2}(-1) \oplus [-2]^{\oplus 2}\) & 1\\
			\midrule

			\(1\) & \(5\) & \(1\) & \(\U \oplus \bH_{5}\) & \([-2] \oplus [-10] \oplus \U\) & 1\\
			\midrule
			
			\(1\) & \(7\) & \(1\) & \(\U^{\oplus 2} \oplus \bK_{7}\) & \([-2] \oplus [14]\) & 1 \\
			
	\end{longtable}
\end{center}

\newcommand{\etalchar}[1]{$^{#1}$}
\bibliographystyle{abbrv}
\bibliography{main}

\medskip \medskip

\noindent Annalisa Grossi\\
\noindent {Alma Mater Studiorum - Università di Bologna, Dipartimento di Matematica, Piazza 
	di porta San Donato 5, 40126 Bologna}\\
\medskip \noindent{\texttt{annalisa.grossi3@unibo.it}}

\noindent{Stevell Muller}\\
\noindent{Institut für Algebraische Geometrie, Leibniz Universität Hannover, 30167 Hannover (Germany)}\\
\medskip \noindent{\texttt{muller@math.uni-hannover.de}}

\end{document}